\documentclass[10pt]{article}

\usepackage{amssymb}
\usepackage{eucal}
\usepackage{amsmath}
\usepackage{amsthm}
\usepackage{amscd}
\usepackage{graphics}
\usepackage{graphicx}
\usepackage{amsfonts}
\usepackage{latexsym}
\usepackage[all]{xy}

\newcommand{\R}{{\mathbb  R}}
\numberwithin{equation}{section}
\newtheorem{thm}{\bf Theorem}[section]

\newtheorem{prop}[thm]{\bf Proposition}
\newtheorem{defn}{\bf Definition}[section]

\theoremstyle{remark}

\textwidth15.8 cm
\topmargin -1.4 cm \textheight22.5 cm \oddsidemargin.5cm
\evensidemargin.5cm

\begin{document}

\title{The stability problem and special solutions for the 5-components Maxwell-Bloch equations}
\author{Petre Birtea, Ioan Ca\c{s}u\footnote{Tel/fax: +40740928382/+40256592316; E-mail: casu@math.uvt.ro (Corresponding author)}\\
{\small Departamentul de Matematic\u a, Universitatea de Vest din Timi\c soara}\\
{\small Bd. V. P\^arvan, Nr. 4, 300223 Timi\c soara, Rom\^ania}\\
{\small E-mail: birtea@math.uvt.ro; casu@math.uvt.ro}}
\date{}

\maketitle

\noindent \textbf{AMS Classification:} 34D20, 34C37, 34C45

\noindent \textbf{Keywords:} stability, Hamilton-Poisson system, bi-Hamiltonian structure, homoclinic orbit, periodic solution, invariant set.

\begin{abstract}
\noindent For the 5-components Maxwell-Bloch system the stability problem for the isolated equilibria is completely solved. Using the geometry of the symplectic leaves, a detailed construction of the homoclinic orbits is given. Studying the problem of invariant sets for the system, we discover a rich family of periodic solutions in explicit form.
\end{abstract}

\section{Introduction}

After averaging and neglecting non-resonant terms, the unperturbed Maxwell-Bloch dynamics in the rotating wave approximation (RWA) is given by
\begin{equation*}
\left\{\begin{array}{l}
\dot X=Y\\
\dot Y=XZ\\
\dot Z=-\frac{1}{2}(XY^*+X^*Y),
\end{array}\right.
\end{equation*}
where $X,Y$ are complex scalar functions, that are denoting the self-consistent electric field and respectively the polarizability of the laser-matter, $Z$ is a real scalar function, which denotes the difference of its occupation numbers. The superscript $^*$ stands for the complex conjugate. For more details about the history and physical interpretations of this system see \cite{holm}, \cite{huang}, \cite{nath}.

Writing $X=x_1+\imath x_2,Y=y_1+\imath y_2$ and $Z=z$ the above system transforms into the 5-components Maxwell-Bloch system
\begin{equation}
\label{sistem5}
\left\{\begin{array}{l}
\dot x_1=y_1\\
\dot y_1=x_1z\\
\dot x_2=y_2\\
\dot y_2=x_2z\\
\dot z=-(x_1y_1+x_2y_2).
\end{array}\right.
\end{equation}
The Maxwell-Bloch system in the form \eqref{sistem5} has the advantage of a rich underlying geometrical structure that can be used in the study of its dynamical behavior. 

The system \eqref{sistem5} admits a Hamilton-Poisson formulation, where the Poisson tensor is given by
\begin{equation*}
%\label{Poisson}
J(x_1,y_1,x_2,y_2,z)=\left[\begin{array}{ccccc}
0&1&0&0&0\\
-1&0&0&0&x_1\\
0&0&0&1&0\\
0&0&-1&0&x_2\\
0&-x_1&0&-x_2&0
\end{array}\right]
\end{equation*}
and the Hamiltonian function is given by
\begin{equation*}
H(x_1,y_1,x_2,y_2,z)=\frac{1}{2}(y_1^2+y_2^2+z^2).
\end{equation*}
The system has two additional constants of motion, namely the Casimir of the Poisson structure $J$, given by
$$C(x_1,y_1,x_2,y_2,z)=\frac{1}{2}(x_1^2+x_2^2)+z$$
and a constant of motion derived from a bi-Hamiltonian structure of the system \eqref{sistem5} (see \cite{huang}) given by
$$I(x_1,y_1,x_2,y_2,z)=x_2y_1-x_1y_2.$$
A commuting property of the constants of motion $H$ and $I$ holds, i.e. 
$\{H,I\}=0$, where $\{\cdot,\cdot\}$ is the Poisson bracket associated to the Poisson tensor $J$.

\section{Stability of equilibria}

By a direct computation we obtain three families of equilibria for the system \eqref{sistem5}:
$${\cal E}_1=\{(0,0,0,0,M)|~M\in\R^*\};~~~{\cal E}_2=\{(M,0,N,0,0)|~M,N\in\R,M^2+N^2\neq 0\};~~~{\cal E}_3=\{(0,0,0,0,0)\}.$$

It is a well known fact that the dynamics of a Hamilton-Poisson system is foliated by the symplectic leaves associated to the Poisson structure. In our case the regular symplectic leaves are given by the connected components corresponding to pre-images of regular values of the Casimir function $C$. We denote by ${\cal O}_c=C^{-1}(c),c\in\R$ the regular symplectic leaves of the Poisson structure $J$.

The restriction of the dynamics \eqref{sistem5} to a regular leaf ${\cal O}_c$ becomes a completely integrable Hamiltonian system 
\begin{equation}
\label{redus}
({\cal O}_c,\omega_{{\cal O}_c},H|_{{\cal O}_c}),
\end{equation}
where the second commuting constant of motion is $I|_{{\cal O}_c}$. We will study the stability problem of equilibria on regular leaves ${\cal O}_c$ analogously to the approach used in \cite{nonlinear}.

The equilibria of the Hamiltonian system \eqref{redus} can be divided in two types:
\begin{align*}
\mathcal{K}_0&: = \left\{(x_1,y_1,x_2,y_2,z)\in {\cal O}_c \mid \mathbf{d}\left(H|_{{\cal O}_c}\right)(x_1,y_1,x_2,y_2,z) = 0, \;
\mathbf{d}\left(I|_{{\cal O}_c}\right)(x_1,y_1,x_2,y_2,z)=0\right\};\\
\mathcal{K}_1&: = \left\{(x_1,y_1,x_2,y_2,z)\in {\cal O}_c \mid \mathbf{d}\left(H|_{{\cal O}_c}\right)(x_1,y_1,x_2,y_2,z) = 0, \;
\mathbf{d}\left(I|_{{\cal O}_c}\right)(x_1,y_1,x_2,y_2,z)\neq 0\right\}.
\end{align*}

\begin{prop}
On a regular symplectic leaf ${\cal O}_c$ we have the following characterization for the equilibria:
$$\mathcal{K}_0={\cal O}_c\cap ({\cal E}_1\cup {\cal E}_3);~~~\mathcal{K}_1={\cal O}_c\cap {\cal E}_2.$$
\end{prop}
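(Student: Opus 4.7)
The plan is to exploit the fact that $\mathcal{O}_c$ is a regular level set of $C$, so a function $F$ restricted to $\mathcal{O}_c$ has vanishing differential at $p$ precisely when $\nabla F(p)$ is parallel to $\nabla C(p)$. Since $\partial C/\partial z \equiv 1$, the gradient $\nabla C = (x_1,0,x_2,0,1)$ never vanishes, so the Lagrange-multiplier criterion applies uniformly on $\mathcal{O}_c$: $\mathbf{d}(F|_{\mathcal{O}_c})(p)=0$ iff there exists $\lambda\in\R$ with $\nabla F(p)=\lambda\,\nabla C(p)$. I would apply this separately to $H$ and $I$.

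For $H$, with $\nabla H=(0,y_1,0,y_2,z)$, the system $\nabla H=\lambda\nabla C$ reads $\lambda x_1=0,\;y_1=0,\;\lambda x_2=0,\;y_2=0,\;z=\lambda$. A short case split on whether $\lambda=0$ or $\lambda\neq 0$ yields exactly the union $\mathcal{E}_1\cup\mathcal{E}_2\cup\mathcal{E}_3$ (and recovers, as a sanity check, the equilibria of the ambient system \eqref{sistem5}). Hence the critical set of $H|_{\mathcal{O}_c}$ equals $\mathcal{O}_c\cap(\mathcal{E}_1\cup\mathcal{E}_2\cup\mathcal{E}_3)$, which is the disjoint union $\mathcal{K}_0\cup\mathcal{K}_1$ by definition.

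For $I$, with $\nabla I=(-y_2,x_2,y_1,-x_1,0)$, the system $\nabla I=\mu\nabla C$ forces $\mu=0$ from the fifth component, and then $x_1=x_2=y_1=y_2=0$ from the remaining four. Therefore $\mathbf{d}(I|_{\mathcal{O}_c})(p)=0$ iff $p$ lies on the $z$-axis. Matching this against the three families: points of $\mathcal{E}_1$ and $\mathcal{E}_3$ satisfy $x_1=x_2=y_1=y_2=0$, hence belong to $\mathcal{K}_0$; points of $\mathcal{E}_2$ satisfy $x_1^2+x_2^2\neq 0$, so $\mathbf{d}(I|_{\mathcal{O}_c})\neq 0$ there, placing them in $\mathcal{K}_1$. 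Combined with the $H$-computation this gives the claimed characterization.

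There is no real obstacle here, as everything reduces to two small linear systems. The only point worth being careful about is the equivalence between being an equilibrium of the reduced Hamiltonian system $(\mathcal{O}_c,\omega_{\mathcal{O}_c},H|_{\mathcal{O}_c})$ and having $\mathbf{d}(H|_{\mathcal{O}_c})=0$, which is built into the definitions of $\mathcal{K}_0,\mathcal{K}_1$ used above, together with the regularity of $\nabla C$ on $\mathcal{O}_c$ that legitimises the Lagrange multiplier step.
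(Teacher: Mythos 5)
Your proof is correct, and it reaches the conclusion by a somewhat different route than the paper. For the $H$-part the paper computes nothing: it invokes the abstract fact that for a Hamiltonian system on a symplectic manifold the equilibria are exactly the critical points of the Hamiltonian, so $\mathbf{d}\left(H|_{{\cal O}_c}\right)(e)=0$ holds automatically at every equilibrium of the reduced system \eqref{redus}, and ${\cal K}_0\cup{\cal K}_1$ is by construction that set of equilibria. Your Lagrange-multiplier computation of the critical set of $H|_{{\cal O}_c}$ establishes the same identification explicitly, with the small bonus of verifying that these critical points coincide with the equilibria of \eqref{sistem5} lying on the leaf --- a point the paper leaves implicit. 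For the $I$-part the paper works pointwise on ${\cal E}_2$: at $e_2=(M,0,N,0,0)$ it describes $T_{e_2}{\cal O}_c$ as the kernel of $\mathbf{d}C(e_2)$ and exhibits the explicit tangent vector $\bar v=(-N,N,M,-M,0)$ on which $\mathbf{d}I(e_2)$ takes the value $M^2+N^2\neq 0$, while on ${\cal E}_1\cup{\cal E}_3$ it observes that $\mathbf{d}I$ itself vanishes. You instead determine the full critical set of $I|_{{\cal O}_c}$ (the $z$-axis intersected with the leaf) and then intersect with the three families. Both arguments amount to short linear algebra; yours is more uniform, applying one criterion (legitimised, as you note, by $\nabla C\neq 0$ on ${\cal O}_c$) to both constants of motion and yielding the complete critical sets, while the paper's is more economical, spending effort only on the one non-trivial non-vanishing claim.
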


\begin{proof}
Because \eqref{redus} is a Hamiltonian system on a symplectic manifold the condition $\mathbf{d}\left(H|_{{\cal O}_c}\right)(e)=0$ is verified for any equilibrium point $e\in {\cal O}_c$.\\ 
Let $e_2\in {\cal O}_c\cap {\cal E}_2$. Then
\begin{equation*}
T_{e_2}{\cal O}_c=\{\bar v=(v_1,v_2,v_3,v_4,v_5)\in\R^5|~<\bar v,\nabla C(e_2)>=0\}=\{\bar v\in\R^5|~v_1M+v_3N+v_5=0\}.
\end{equation*}
We also have
$\mathbf{d}I(e_2)=N\mathbf{d}y_1-M\mathbf{d}y_2$.
Taking, for example, $\bar v=(-N,N,M,-M,0)\in T_{e_2}{\cal O}_c$ we have 
$\mathbf{d}\left(I|_{{\cal O}_c}\right)(e_2)(\bar v)=M^2+N^2\neq 0$,
which proves that $\mathbf{d}\left(I|_{{\cal O}_c}\right)(e_2)\neq 0$.

For the equilibria $e$ in ${\cal E}_1\cup {\cal E}_3$ the condition $\mathbf{d}\left(I|_{{\cal O}_c}\right)(e)= 0$ is trivially verified.
\end{proof}

The commutativity of the constants of motion $H|_{{\cal O}_c}$ and $I|_{{\cal O}_c}$ with respect to the symplectic form $\omega_{{\cal O}_c}$ implies that at an equilibrium point $e\in {\cal O}_c$ we have
$$\left[\mathbf{D}X_{H|_{{\cal O}_c}}(e), \mathbf{D}X_{I|_{{\cal O}_c}}(e)\right]=0,$$
where $\mathbf{D}X_{H|_{{\cal O}_c}}(e)$ and $\mathbf{D}X_{I|_{{\cal O}_c}}(e)$ are the derivatives of the vector fields  $X_{H|_{{\cal O}_c}}$ and $X_{I|_{{\cal O}_c}}$ at the equilibrium $e$ and consequently $\mathbf{D}X_{H|_{{\cal O}_c}}(e)$, $\mathbf{D}X_{I|_{{\cal O}_c}}(e)$ are infinitesimally symplectic relative to the symplectic form $\omega_{{\cal O}_c}(e)$ on the vector space  $T_e {\cal O}_c$.

\begin{defn}
\label{non-degenerate}
An equilibrium point $e\in \mathcal{K}_0$ is called non-degenerate if $\mathbf{D}X_{H|_{{\cal O}_c}}(e)$ and $\mathbf{D}X_{I|_{{\cal O}_c}}(e)$ generate a Cartan subalgebra of the Lie algebra of infinitesimal linear transformations of the symplectic vector space $\left(T_e {\cal O}_c, \omega_{{\cal O}_c}(e)\right)$. A Cartan subalgebra of the Lie algebra $\hbox{sp}(4,\R)$ is a two dimensional commutative sub-algebra which contains an element whose eigenvalues are all distinct.
\end{defn}

It follows that for a non-degenerate equilibrium belonging to $\mathcal{K}_0$ the matrices $\mathbf{D}X_{H|_{{\cal O}_c}}(e)$ and $\mathbf{D}X_{I|_{{\cal O}_c}}(e)$ can be simultaneously conjugated to one of the following four Cartan sub-algebras
\begin{equation}
\left.\begin{array}{cc}
\text{Type 1:} \quad 
\begin{bmatrix}
0&0&\!\!\!\!-A&0\\
0&0&0&\!\!\!\!-B\\
A&0&0&0\\
0&B&0&0
\end{bmatrix}&
\qquad\qquad \text{Type 2:} \quad
\begin{bmatrix}
-A&0&0&0\\
\;\;\;0&0&0&\!\!\!\!-B\\
\;\;\;0&0&A&0\\
\;\;\;0&B&0&0
\end{bmatrix} \\ 
~&~\\
\label{cartan_subalgebras}
\text{Type 3:} \quad 
\begin{bmatrix}
-A&0&0&0\\
\;\;\;0&\!\!\!\!-B&0&0\\
\;\;\;0&0&A&0\\
\;\;\;0&B&0&B
\end{bmatrix}&
\qquad\qquad \text{Type 4:} \quad
\begin{bmatrix}
-A&-B&0&\;\;0\\
\;\;\;B&-A&0&\;\;0\\
\;\;\;0&\;\;\;0&A&\!\!-B\\
\;\;\;0&\;\;\;0&B&\;\;A
\end{bmatrix} \end{array}\right.
\end{equation}
where $A,B \in \mathbb{R}$ (see, e.g., \cite{BoFo04}, Theorems 1.3 and 1.4).\\
Equilibria of type 1 are called {\it center-center} with the corresponding eigenvalues for the linearized system: $i A,-\imath A,i B,-\imath B$.\\
Equilibria of type 2 are called {\it center-saddle} with the corresponding eigenvalues for the linearized system: $A,-A,\imath B,-\imath B$.\\
Equilibria of type 3 are called {\it saddle-saddle} with the corresponding eigenvalues for the linearized system: $A,-A,B,-B$.\\
Equilibria of type 4 are called {\it focus-focus} with the corresponding eigenvalues for the linearized system: $A+\imath B,A-\imath B,-A+\imath B,-A-\imath B$.

\begin{thm}
We have the following stability behavior for the equilibria in ${\cal O}_c\cap {\cal E}_1$:
\begin{itemize}
\item[(i)] The equilibrium point ${\cal O}_c\cap {\cal E}_1=\{(0,0,0,0,c)\}$ for $c>0$ is a non-degenerate equilibrium of type focus-focus and consequently unstable.
\item[(ii)] The equilibrium point ${\cal O}_c\cap {\cal E}_1=\{(0,0,0,0,c)\}$ for $c<0$ is a non-degenerate equilibrium of type center-center and consequently stable.
\end{itemize}
\end{thm}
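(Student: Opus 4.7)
The plan is to verify non-degeneracy of $e=(0,0,0,0,c)$ in the sense of Definition \ref{non-degenerate}, identify the Cartan type among those listed in \eqref{cartan_subalgebras}, and read off (in)stability. The previous proposition already places $e$ in $\mathcal{K}_0$, so it remains only to linearise $X_H$ and $X_I$ on $T_e\mathcal{O}_c$ and analyse the pair.

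Since $\nabla C(e)=(0,0,0,0,1)$, the tangent space $T_e\mathcal{O}_c$ is the hyperplane $\{v_5=0\}$, which one identifies with $\R^4$ via the first four coordinates. Computing the Jacobians of $X_H=J\nabla H$ and $X_I=J\nabla I$ at $e$ yields
\begin{equation*}
H':=\mathbf{D}X_{H|_{\mathcal{O}_c}}(e)=\begin{pmatrix}0&1&0&0\\c&0&0&0\\0&0&0&1\\0&0&c&0\end{pmatrix},\qquad I':=\mathbf{D}X_{I|_{\mathcal{O}_c}}(e)=\begin{pmatrix}0&0&1&0\\0&0&0&1\\-1&0&0&0\\0&-1&0&0\end{pmatrix},
\end{equation*}
after dropping the vanishing fifth row and column. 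These commute automatically because $\{H,I\}=0$. Direct inspection gives $\operatorname{Spec}(H')=\{\pm\sqrt c\}$ and $\operatorname{Spec}(I')=\{\pm i\}$, each eigenvalue with multiplicity two.

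To simultaneously establish non-degeneracy and identify the Cartan type the joint spectrum must be used: the $\pm i$-eigenspaces of $I'$ are spanned by $\{(1,0,\pm i,0),(0,1,0,\pm i)\}$, on which $H'$ acts as the $2\times 2$ block $\bigl(\begin{smallmatrix}0&1\\c&0\end{smallmatrix}\bigr)$, producing simultaneous eigenvalues $(\pm\sqrt c,\pm i)$. Hence a generic combination $\alpha H'+\beta I'$ has four distinct eigenvalues and the pair generates a Cartan subalgebra. For $c>0$, the element $H'+I'$ has eigenvalues $\pm\sqrt c\pm i$, of the focus-focus shape $\pm A\pm iB$ with $A,B>0$, so the Cartan type is 4; instability then follows because $H'$ itself already has the real eigenvalues $\pm\sqrt c$ with nonzero real part, so the spectral criterion applies. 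For $c<0$, the same element has eigenvalues $\pm i(\sqrt{|c|}\pm 1)$, purely imaginary and distinct (after switching to $2H'+I'$ in the exceptional case $|c|=1$), matching the center-center pattern $\pm iA,\pm iB$, so the type is 1.

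Stability in case (ii) will be obtained through a Lyapunov argument rather than directly from the spectral picture, since the eigenvalues are purely imaginary. Parametrising $\mathcal{O}_c$ by $z=c-\tfrac12(x_1^2+x_2^2)$ and expanding $H$ near $e$ gives
\begin{equation*}
H-\tfrac12 c^2=\tfrac12(y_1^2+y_2^2)-\tfrac{c}{2}(x_1^2+x_2^2)+\tfrac18(x_1^2+x_2^2)^2,
\end{equation*}
which is positive definite when $c<0$; hence $e$ is a strict local minimum of $H|_{\mathcal{O}_c}$, and conservation of $H$ along the flow forces Lyapunov stability. The main obstacle is the type identification: because neither $H'$ nor $I'$ alone has simple spectrum, the Cartan type cannot be read from either matrix in isolation, so the joint diagonalisation must be carried out and matched carefully against the four normal forms in \eqref{cartan_subalgebras}.
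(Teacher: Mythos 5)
Your proposal is correct and follows the same overall strategy as the paper: linearise $X_{H|_{{\cal O}_c}}$ and $X_{I|_{{\cal O}_c}}$ at $(0,0,0,0,c)$ (you obtain exactly the same two $4\times 4$ matrices), exhibit a linear combination with four distinct eigenvalues to certify non-degeneracy in the sense of Definition \ref{non-degenerate}, and match the spectrum against the four Cartan normal forms. Where you differ is in the computational route and in how much you justify the final stability claims. The paper forms $\mathbf{D}X_{H|_{{\cal O}_c}}+\alpha\,\mathbf{D}X_{I|_{{\cal O}_c}}$, computes its characteristic polynomial $t^4+(2\alpha^2-2c)t^2+(\alpha^2+c)^2$, and reads off the root structure from the discriminant $-16c\alpha^2$ of the associated quadratic in $s=t^2$; you instead diagonalise the two commuting matrices simultaneously, obtaining the joint spectrum $(\pm\sqrt{c},\pm\imath)$ directly. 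The two computations are equivalent (your joint eigenvalues reproduce the paper's characteristic polynomial), but yours makes the focus-focus/center-center dichotomy visible at a glance and, usefully, exposes the exceptional combination in case (ii): for $c<0$ the element $H'+I'$ fails to have simple spectrum precisely when $|c|=1$ (equivalently, when $\alpha^2+c=0$ in the paper's discriminant computation), a degeneracy the paper's ``similar computations'' glosses over and which you correctly repair by changing the coefficient. Finally, you do not stop at the label ``center-center'' but prove stability in case (ii) outright, by showing that $H|_{{\cal O}_c}-\tfrac12 c^2=\tfrac12(y_1^2+y_2^2)-\tfrac{c}{2}(x_1^2+x_2^2)+\tfrac18(x_1^2+x_2^2)^2$ is positive definite for $c<0$ and invoking the Lyapunov/Dirichlet criterion; this is more elementary and self-contained than relying on the general theory of non-degenerate center-center singularities, and your spectral justification of instability in case (i) (the eigenvalue $\sqrt{c}>0$ of $\mathbf{D}X_{H|_{{\cal O}_c}}$ itself) is likewise made explicit where the paper only writes ``consequently unstable.''
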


\begin{proof} (i) For the linearized systems at the equilibrium $(0,0,0,0,c)$ we have:
$$\mathbf{D}X_{H|_{{\cal O}_c}}(0,0,0,0,c)=
\left[\begin{array}{cccc}
0&1&0&0\\
c&0&0&0\\
0&0&0&1\\
0&0&c&0
\end{array}\right]$$
and its characteristic polynomial has the non-distinct eigenvalues $\sqrt{c},\sqrt{c},-\sqrt{c},-\sqrt{c}$ and respectively
$$\mathbf{D}X_{I|_{{\cal O}_c}}(0,0,0,0,c)=
\left[\begin{array}{cccc}
0&0&1&0\\
0&0&0&1\\
-1&0&0&0\\
0&-1&0&0
\end{array}\right]$$
and its characteristic polynomial has the non-distinct eigenvalues $\imath,\imath,-\imath,-\imath$.\\
To decide the type of stability we need to determine the non-degeneracy of the equilibrium $(0,0,0,0,c)$, i.e. we have to find a linear combination
$\mathbf{D}X_{H|_{{\cal O}_c}}(0,0,0,0,c)+\alpha \mathbf{D}X_{I|_{{\cal O}_c}}(0,0,0,0,c)$, where $\alpha$ is a non-zero real number, that has distinct eigenvalues. The characteristic polynomial of such a linear combination is given by
$$t^4+(2\alpha^2 -2c)t^2+(\alpha^2+c)^2.$$
After the substitution $t^2=s$ we obtain the quadratic polynomial
$$s^2+(2\alpha^2 -2c)s+(\alpha^2+c)^2,$$
which has the discriminant $\Delta = -16c\alpha^2 <0$ and therefore has two distinct complex roots. It follows that the characteristic polynomial  
$t^4+(2\alpha^2 -2c)t^2+(\alpha^2+c)^2$ has four distinct complex eigenvalues of the form $A+\imath B,A-\imath B,-A+\imath B,-A-\imath B$ with $A,B\in \R^*$. Consequently, the equilibrium $(0,0,0,0,c)$ for $c>0$ is a non-degenerate equilibrium of focus-focus type for the dynamics \eqref{redus} and therefore unstable for this dynamics. 

Similar computations lead to the proof of (ii).
\end{proof}

Although the equilibrium $(0,0,0,0,0)\in {\cal O}_0$ belongs to $\mathcal{K}_0$, it is a degenerate equilibrium in the sense of Definition \ref{non-degenerate}. Indeed, any linear combination $\alpha \mathbf{D}X_{H|_{{\cal O}_0}}(0,0,0,0,0)+\beta \mathbf{D}X_{I|_{{\cal O}_0}}(0,0,0,0,0)$
has the characteristic polynomial $(t^2+\beta^2)^2$, which has non-distinct eigenvalues. Its stability property can be established using an algebraic method (see \cite{aeyels}, \cite{dan1}, \cite{dan2}, \cite{dan3}). More precisely, the system of algebraic equations
$$H(x_1,y_1,x_2,y_2,z)=H(0,0,0,0,0),I(x_1,y_1,x_2,y_2,z)=I(0,0,0,0,0),C(x_1,y_1,x_2,y_2,z)=C(0,0,0,0,0)$$
has as unique solution the equilibrium $(0,0,0,0,0)$, leading to the following stability result.

\begin{thm}
The equilibrium $(0,0,0,0,0)$ is degenerate and stable with respect to the dynamics \eqref{sistem5}.
\end{thm}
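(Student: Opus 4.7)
The statement has two distinct contents: the degeneracy of the equilibrium in the sense of Definition \ref{non-degenerate} and its Lyapunov stability for the full five-dimensional dynamics \eqref{sistem5}. I would dispatch the first by a direct linear-algebra computation on the reduced symplectic leaf $\mathcal{O}_0$, and the second by building a Lyapunov function from the commuting constants of motion $H$, $I$ and $C$.

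For the degeneracy part, I would work in a symplectic chart on $\mathcal{O}_0$ near the origin, write down the $4\times4$ matrices $\mathbf{D}X_{H|_{\mathcal{O}_0}}(0)$ and $\mathbf{D}X_{I|_{\mathcal{O}_0}}(0)$, and expand the characteristic polynomial of an arbitrary real combination $\alpha\,\mathbf{D}X_{H|_{\mathcal{O}_0}}(0)+\beta\,\mathbf{D}X_{I|_{\mathcal{O}_0}}(0)$. The target, flagged in the passage just before the theorem, is the polynomial $(t^2+\beta^2)^2$, whose spectrum $\{\pm i\beta\}$ always comes with multiplicity two. Since no element of the commuting family has simple spectrum, no Cartan subalgebra of $\mathfrak{sp}(4,\R)$ can contain both matrices, and the origin is degenerate in the sense of Definition \ref{non-degenerate}.

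For the stability part the plan is to set
\[
V := H^{2} + I^{2} + C^{2},
\]
to verify that $V$ is a smooth nonnegative conserved quantity vanishing at the origin, and to check that $V$ vanishes \emph{only} there. The latter is the algebraic observation already stated in the excerpt: $H=0$ forces $y_1=y_2=z=0$, then $C=0$ forces $x_1=x_2=0$, while $I=0$ is automatic. To turn this into Lyapunov stability I would show that, for small $\varepsilon>0$, the sublevel set $\{V<\varepsilon\}$ is contained in a neighborhood of the origin that shrinks with $\varepsilon$: the bound $H^{2}<\varepsilon$ gives $y_1^{2}+y_2^{2}+z^{2}\le 2\sqrt{\varepsilon}$, whence $|z|\le\sqrt{2}\,\varepsilon^{1/4}$, and then $C^{2}<\varepsilon$ combined with this bound yields $x_1^{2}+x_2^{2}\le 2\sqrt{\varepsilon}+2\sqrt{2}\,\varepsilon^{1/4}$. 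Since $V$ is invariant under the flow, every trajectory starting in $\{V<\varepsilon\}$ remains there, which is the definition of Lyapunov stability for \eqref{sistem5}.

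The delicate step is the last estimate: the function $V$ is \emph{not} positive definite at quadratic order (no quadratic term in $x_1,x_2$ appears in its Taylor expansion), so the classical Morse-type Lyapunov criterion does not apply and one is forced to exploit the specific algebraic interplay between $H$ and $C$ described above. This is exactly the algebraic stability principle of \cite{aeyels,dan1,dan2,dan3} that the authors invoke.
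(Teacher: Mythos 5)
Your proposal is correct and follows essentially the same route as the paper: degeneracy via the observation that every combination $\alpha\,\mathbf{D}X_{H|_{\mathcal{O}_0}}(0)+\beta\,\mathbf{D}X_{I|_{\mathcal{O}_0}}(0)$ has characteristic polynomial $(t^2+\beta^2)^2$ with repeated eigenvalues, and stability via the algebraic method based on the fact that the level set $\{H=0,\ I=0,\ C=0\}$ reduces to the origin. The only difference is that you spell out explicitly the Lyapunov function $V=H^2+I^2+C^2$ and the sublevel-set estimates, whereas the paper delegates that step to the cited references on the algebraic stability method.
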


\section{Homoclinic orbits}

In this section we will give an explicit form of the homoclinic orbits for the unstable equilibria of focus-focus type. This type of equilibria belong to symplectic orbits ${\cal O}_c$ with $c>0$. 

In order to compute the homoclinic orbits, we introduce a local system of coordinates around the equilibrium point $e_c=(0,0,0,0,c)\in {\cal O}_c$. The local system of coordinates is given by
$$\Phi:\R^5\rightarrow \R^5,~~~ (r_1,\theta,y_1,y_2,c)\mapsto \left\{\begin{array}{l}
x_1=r_1\cos\theta\\
x_2=r_1\sin\theta\\
y_1=y_1\\
y_2=y_2\\
z=c-\frac{1}{2}r_1^2.\end{array}\right.$$
Freezing the parameter $c$ we obtain the local system of coordinates on the symplectic orbit ${\cal O}_c$ around the equilibrium point $e_c$:
$$\Phi_c:\R^4\rightarrow {\cal O}_c\setminus \{e_c\},~~~ (r_1,\theta,y_1,y_2)\mapsto \left\{\begin{array}{l}
x_1=r_1\cos\theta\\
x_2=r_1\sin\theta\\
y_1=y_1\\
y_2=y_2.\end{array}\right.$$
As we have excluded the equilibrium point $e_c$ we can work under the assumption that $r_1\neq 0$. The advantage of using polar coordinates in the study of bi-focal homoclinic orbits in four dimensions can be ascertained in \cite{fowler}. By a straightforward computation we obtain that the reduced system on the symplectic leaf ${\cal O}_c$ is given by
\begin{equation}
\label{sistem-coord-noi}
\left\{\begin{array}{l}
\dot r_1=y_1\cos\theta+y_2\sin\theta\\
\dot\theta=\displaystyle\frac{y_2\cos\theta-y_1\sin\theta}{r_1}\\
\dot y_1=r_1\cos\theta\left(c-\frac{1}{2}r_1^2\right)\\
\dot y_2=r_1\sin\theta\left(c-\frac{1}{2}r_1^2\right).\end{array}\right.
\end{equation}
Using a continuity argument and the fact that $I(x_1,y_1,x_2,y_2,z)=x_2y_1-x_1y_2$ is a constant of motion we obtain that if there exists a homoclinic it should belong to the connected component of level set $I^{-1}(I(e_c))=I^{-1}(0)$ that contains $e_c$. If a curve $c(t)=(r_1(t),\theta(t),y_1(t),y_2(t))$ is a homoclinic, then it has to be a solution for the system \eqref{sistem-coord-noi} and to satisfy the following equation for all $t$:
$$r_1(t)(y_1(t)\sin\theta(t)-y_2(t)\cos\theta(t))=0.$$
This implies that $\dot\theta(t)=0$ and thus $\theta(t)=\theta_0$  constant for all $t$. By differentiation and substitution we obtain the following second order equation
$$\ddot r_1=r_1\left(c-\frac{1}{2}r_1^2\right).$$
Making the change of variable $r_1=2\sqrt{c}~\tilde{r_1}$ and the time re-parametrization $\sqrt{c}~t=\tilde{t}$ we obtain the equation
$$\ddot{\tilde{r}}_1(\tilde{t})=\tilde{r}_1(\tilde{t})-2\tilde{r}_1^3(\tilde{t}).$$
It is well known that this second order differential equation has as solutions $\pm\hbox{cn} (\tilde{t},1)=\pm\hbox{sech} (\tilde{t})$. Consequently, we obtain $r_1(t)=\pm 2\sqrt{c}~\hbox{sech}(\sqrt{c}t).$

Substituting $r_1(t)$ in the expression of the local parametrization $\Phi_c$, and for $z$ in the expression of local parametrization $\Phi$ and integrating for $y_1$ and $y_2$ in \eqref{sistem-coord-noi}
we obtain the homoclinic solutions 
\begin{equation*}
\left\{ \begin{array}{l}
x_1(t)=\pm 2\sqrt{c}~\hbox{sech}(\sqrt{c}t)\cos\theta_0\\
x_2(t)=\pm 2\sqrt{c}~\hbox{sech}(\sqrt{c}t)\sin\theta_0\\
y_1(t)=\mp 2c~\hbox{sech}(\sqrt{c}t)\hbox{tanh}(\sqrt{c}t)\cos\theta_0\\
y_2(t)=\mp 2c~\hbox{sech}(\sqrt{c}t)\hbox{tanh}(\sqrt{c}t)\sin\theta_0\\
z(t)=c(1-2\hbox{sech}^2(\sqrt{c}t)).\end{array}\right.
\end{equation*}
The above homoclinic orbits, using different parametrization and arguments, have been discussed in \cite{holm}, \cite{huang}.

\section{Invariant sets and periodic orbits}

We will look for invariant sets of the system \eqref{sistem5} using the technique presented in \cite{chaos}. We have the following vectorial conserved quantity
${\bf F}:\R^5\to\R^3,{\bf F}(p)=(H(p),I(p),C(p))$. In \cite{chaos}, Theorem 2.3, it has been proved that the set $M_{(2)}^{{\bf F}}=\{p\in\R^5|~\hbox{rank}~\nabla {\bf F}(p)=2\}$ is invariant under the dynamics of the system. By direct computation we obtain that $M_{(2)}^{{\bf F}}=M_1\cup M_2$, where
\begin{align*}
M_1&:=\left\{\left(x_1,y_1,x_2,-\frac{x_1y_1}{x_2},-\frac{y_1^2}{x_2^2}\right)|~x_2\ne 0\right\};\\
M_2&:=\left\{\left(x_1,0,0,y_2,-\frac{y_2^2}{x_1^2}\right)|~x_1\ne 0\right\}.
\end{align*}
The union $M_1\cup M_2$, which is a connected set in $\R^5$, is invariant under the dynamics \eqref{sistem5}, but neither the set $M_1$ nor the set $M_2$ are invariant under this dynamics. The vector field corresponding to \eqref{sistem5} is tangent to the sub-manifold $M_1$ and the restricted dynamics on $M_1$ is given by
\begin{equation}\label{sistemmic}
\left\{\begin{array}{l}
\dot x_1=y_1\\
\dot y_1=-\frac{x_1y_1^2}{x_2^2}\\
\dot x_2=-\frac{x_1y_1}{x_2}.
\end{array}\right.
\end{equation}
We notice that the above dynamical system has two conserved quantities, $f_1,f_2:M_1\to\R$, $f_1(x_1,y_1,x_2)=x_1^2+x_2^2$ and $f_2(x_1,y_1,x_2)=\frac{y_1}{x_2}$. Using these conserved quantities and choosing an initial condition $x_1^0,y_1^0,x_2^0$ with $x_2^0\ne 0$ and $y_1^0\ne 0$ we can explicitly solve the system \eqref{sistemmic}:
\begin{equation*}
\left\{\begin{array}{l}
x_1(t)=x_2^0\sin\left(\frac{y_1^0}{x_2^0}t\right)+x_1^0\cos\left(\frac{y_1^0}{x_2^0}t\right)\\
y_1(t)=-\frac{y_1^0}{x_2^0}\left(x_1^0\sin\left(\frac{y_1^0}{x_2^0}t\right)-x_2^0\cos\left(\frac{y_1^0}{x_2^0}t\right)\right)\\
x_2(t)=-x_1^0\sin\left(\frac{y_1^0t}{x_2^0}t\right)+x_2^0\cos\left(\frac{y_1^0}{x_2^0}t\right).
\end{array}\right.
\end{equation*}
Notice that if $y_1^0 =0$ we obtain as constant solutions the equilibrium points from ${\cal E}_2\subset M_1$. The above solution is defined on time intervals $(t_k,t_{k+1})$, where $t_k=\frac{x_2^0}{y_1^0}\vartheta +k\pi \frac{x_2^0}{y_1^0}$ with $k\in\mathbb{Z}$ and $\vartheta\in [0,2\pi)$ is the unique real number such that $\sin\vartheta =\frac{x_2^0}{\sqrt{(x_1^0)^2+(x_2^0)^2}}$ and $ \cos\vartheta =\frac{x_1^0}{\sqrt{(x_1^0)^2+(x_2^0)^2}}$.
For time values $t_k$ the above solution exits the set $M_1$ and punctures the set $M_2$, thus making the union $M_1\cup M_2$ an invariant set. Although the solution starting from $M_1$ is not complete, we can construct a complete periodic solution for the initial system \eqref{sistem5} given by  
\begin{equation*}
\left\{\begin{array}{l}
x_1(t)=x_2^0\sin\left(\frac{y_1^0}{x_2^0}t\right)+x_1^0\cos\left(\frac{y_1^0}{x_2^0}t\right)\\
y_1(t)=-\frac{y_1^0}{x_2^0}\left(x_1^0\sin\left(\frac{y_1^0}{x_2^0}t\right)-x_2^0\cos\left(\frac{y_1^0}{x_2^0}t\right)\right)\\
x_2(t)=-x_1^0\sin\left(\frac{y_1^0}{x_2^0}t\right)+x_2^0\cos\left(\frac{y_1^0}{x_2^0}t\right)\\
y_2(t)=-\frac{y_1^0}{x_2^0}\left(x_2^0\sin\left(\frac{y_1^0}{x_2^0}t\right)+x_1^0\cos\left(\frac{y_1^0}{x_2^0}t\right)\right)\\
z(t)=-\frac{(y_1^0)^2}{(x_2^0)^2}.
\end{array}\right.
\end{equation*}  

\medskip

\noindent {\bf Acknowledgements.} Petre Birtea was supported by a grant of the Romanian National Authority for Scientific Research, CNCS – UEFISCDI, project number PN-II-RU-TE-2011-3-0006. Ioan Ca\c su was supported by a grant of the Romanian National Authority for Scientific Research, CNCS – UEFISCDI, project number PN-II-ID-PCE-2011-3-0571.

\end{document}